\renewcommand{\iff}{if and only if }
\newcommand{\st}{such that }
\newcommand{\ModR}{\hbox{{\rm Mod-}}R}
\newcommand{\Z}{\mathbb{Z}}
\DeclareMathOperator{\Hom}{Hom}
\DeclareMathOperator{\Ker}{Ker}
\DeclareMathOperator{\Img}{Im}
\theoremstyle{plain}
\newtheorem{thm}{Theorem}[section]
\newtheorem{prop}[thm]{Proposition}
\newtheorem{lem}[thm]{Lemma}
\newtheorem{cor}[thm]{Corollary}
\theoremstyle{definition}
\theoremstyle{remark}
\begin{document}
\title[On the solvability of systems of linear equations over $\Z$]{On the nontrivial solvability of systems of homogeneous linear equations over $\mathbb Z$ in ZFC}

\author{\textsc{Jan \v Saroch}}
\address{Charles University, Faculty of Mathematics and Physics, Department of Algebra \\ 
Sokolovsk\'{a} 83, 186 75 Praha~8, Czech Republic}
\email{saroch@karlin.mff.cuni.cz}

\keywords{homogeneous $\mathbb Z$-linear equation, $\kappa$-free group, $\mathcal L_{\omega_1\omega}$-compact cardinal}

\thanks{This work has been supported by Charles Univ.\ Research Centre program No.\ UNCE/SCI/022 and by grant GA\v CR 17-23112S}

\subjclass[2010]{08A45, 13C10 (primary) 20K30, 03E35, 03E55 (secondary)}
\date{\today}

\begin{abstract} Following the paper \cite{HeTa}, we investigate in ZFC the following compactness question: for which unountable cardinals $\kappa$, an arbitrary nonempty system $S$ of homogeneous $\mathbb Z$-linear equations is nontrivially solvable in $\mathbb Z$ provided that each its nonempty subsystem of cardinality $<\kappa$ is nontrivially solvable in $\mathbb Z$? \end{abstract}

\maketitle
\vspace{4ex}

\section{Introduction and preliminaries}

In what follows, group means always an abelian group, i.e. a $\mathbb Z$-module. We say that a system $S$ of homogeneous $\mathbb Z$-linear equations with a set $X=\{x_i\mid i\in I\}$ of variables is \emph{nontrivially solvable} in a group $H$ if there exists a mapping $f:X\to H\setminus\{0\}$ such that, whenever $\sum_{j\in J}a_jx_j = 0$ is an equation from $S$ (where $J$ is a~finite subset of $I$ and $a_j\in \mathbb Z$ for each $j\in J$), then $\sum_{j\in J}a_jf(x_j) = 0$ holds in $H$.

This notion of nontriviality is a little bit unusual. If we assume instead that the mapping $f$ goes to $H$ and it is not constantly zero on all $x\in X$ that appear in the system $S$, we say that the system $S$ is \emph{weakly nontrivially solvable} in $H$. More natural as it might me, this weaker notion has got one significant disadvantage: unlike with nontrivial solvability, if a system $S$ is weakly nontrivially solvable and $T$ is a nonempty subsystem of $S$, then $T$ need not be weakly nontrivially solvable. Notice also that an empty system $S$ is (weakly) nontrivially solvable by definition.
\smallskip

Motivated by the work in \cite{HeTa}, our aim is to characterize the class $\mathcal S$ ($\mathcal {WS}$, resp.) of all infinite cardinals $\kappa$ such that any system $S$ of homogeneous $\mathbb Z$-linear equations is nontrivially (weakly nontrivially, resp.) solvable in $\mathbb Z$ provided that each subsystem $T\subseteq S$ of cardinality $<\kappa$ is nontrivially (weakly nontrivially, resp.) solvable in $\mathbb Z$. In \cite[Section 2.2]{HeTa}, the authors present several well-known examples of countable $S$ which show in ZF that $\aleph_0\not\in\mathcal S\cup\mathcal{WS}$. They also discuss various interesting related questions in ZF: among other things, they provide a model of ZF without choice where $\aleph_1\not\in\mathcal{S}$ while they note that the result is not known in ZFC.

In this short note, we use $\kappa$-free groups with trivial dual to show that ZFC actually proves $\aleph_n\not\in\mathcal{S}$ for each $n\in\omega$. Moreover, it is consistent with ZFC that $\mathcal S = \mathcal{WS} = \varnothing$. On the other hand, we are able to prove that $\kappa\in\mathcal{WS}\cap\mathcal S$ whenever there exits a regular $\mathcal L_{\omega_1\omega}$-compact cardinal below $\kappa$ (see Corollary~\ref{c:S_pos} and Theorem~\ref{t:WS_main}). A detailed discussion is contained in Section~\ref{s:conrem}.

For an unexplained terminology, we recommend, for instance, the very well-written extensive book \cite{EM}.

\smallskip

\section{The case of $\mathcal S$}

Recall that, given an uncountable cardinal $\nu$, we say that a cardinal $\kappa$ is \emph{$\mathcal L_{\nu\omega}$-compact} if every $\kappa$-complete filter on any set $I$ can be extended to a $\nu$-complete ultrafilter. Observe that a cardinal $\mu$ is $\mathcal L_{\nu\omega}$-compact whenever there exists an $\mathcal L_{\nu\omega}$-compact cardinal $\lambda$ such that $\lambda\leq\mu$. This is obviously a large cardinal notion for the existence of an $\mathcal L_{\nu\omega}$-compact cardinal implies the existence of a measurable cardinal.

The following result is a classic one. We give a proof for the reader's convenience.

\begin{prop} \label{p:S_pos} Let $\lambda$ be a regular $\mathcal L_{\nu\omega}$-compact cardinal and $\mathcal Z$ a structure in a~first order language $L$ with $|Z|<\nu$. Then a system $S$ consisting of first order formulas in variables from a set $X$ is realized in $\mathcal Z$ provided that each its subsystem $T$ of cardinality $<\lambda$ is realized in $\mathcal Z$.
\end{prop}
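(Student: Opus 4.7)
The plan is to run a standard ultraproduct-style argument, turning the family of realizations of small subsystems into a single realization of $S$ by means of a $\nu$-complete ultrafilter. Let $I$ be the collection of all subsystems $T \subseteq S$ of cardinality $<\lambda$; by hypothesis, for each $T \in I$ we can fix a realization $f_T \colon X \to Z$. For every formula $s \in S$, put $D_s = \{T \in I : s \in T\}$. Since $\lambda$ is regular, any intersection $\bigcap_{s \in S_0} D_s$ with $|S_0|<\lambda$ equals $\{T \in I : S_0 \subseteq T\}$, which contains $S_0$ itself; hence $\{D_s\}_{s\in S}$ generates a $\lambda$-complete filter $\mathcal F$ on $I$.

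Next, I would invoke the $\mathcal L_{\nu\omega}$-compactness of $\lambda$ to extend $\mathcal F$ to a $\nu$-complete ultrafilter $\mathcal U$ on $I$. Using $\mathcal U$, I would define the desired assignment $f \colon X \to Z$ by setting $f(x)$ equal to the unique $z \in Z$ for which $\{T \in I : f_T(x) = z\} \in \mathcal U$. Such $z$ exists and is unique because the sets $\{T : f_T(x)=z\}$ with $z \in Z$ partition $I$ into fewer than $\nu$ pieces, and a $\nu$-complete ultrafilter must concentrate on exactly one block of any such partition; this is precisely where the assumption $|Z| < \nu$ is used.

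It remains to verify that $f$ realizes every $s \in S$. A first-order formula $s$ has finitely many free variables, say $x_1, \dots, x_n$. Each set $A_i = \{T : f_T(x_i) = f(x_i)\}$ belongs to $\mathcal U$, and so does $D_s$. Their finite intersection $D_s \cap A_1 \cap \cdots \cap A_n$ is therefore in $\mathcal U$, in particular non-empty. Picking any $T$ in it, we have $s \in T$ (so $f_T$ realizes $s$) and $f_T(x_i) = f(x_i)$ for all $i$, which yields $\mathcal Z \models s[f(x_1), \dots, f(x_n)]$.

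I do not expect any real obstacle: the only delicate point is the cardinality bookkeeping. Regularity of $\lambda$ provides $\lambda$-completeness of $\mathcal F$; $\nu$-completeness of $\mathcal U$ combined with $|Z| < \nu$ forces the pointwise definition of $f$ to be well-defined; and the verification step is then just a finite-intersection argument. The whole proof is essentially the standard compactness argument for $\mathcal L_{\nu\omega}$ adapted to the setting where we realize in $\mathcal Z$ directly rather than in an ultrapower, which works exactly because the small size of $Z$ lets the $\nu$-complete ultrafilter pick out a genuine element of $Z$ for each variable.
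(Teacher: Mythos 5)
Your proof is correct and follows essentially the same route as the paper: build a $\lambda$-complete filter from the hypothesis (regularity of $\lambda$ ensuring that $<\lambda$ many constraints can be combined), extend it to a $\nu$-complete ultrafilter, use $|Z|<\nu$ and $\nu$-completeness to pin down a value $f(x)\in Z$ for each variable, and finish with a finite-intersection argument formula by formula. The only (cosmetic) difference is that your filter lives on the set of small subsystems with chosen realizations $f_T$, whereas the paper's lives directly on $Z^X$ with generators $E_T$.
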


\begin{proof} First, let $E$ denote the set $Z^X$ of all mappings from $X$ to $Z$. By the assumption, for each $T\in [S]^{<\lambda}$, there exists $e\in E$ such that $\mathcal Z\models \varphi[e]$ for each $\varphi\in T$. Let $\mathcal F$ be the filter on $E$ generated by the sets $E_T = \{e\in E \mid \mathcal Z\models \varphi[e]\hbox{ for all }\varphi\in T\}$. Since $\lambda$ is regular, we see that $\mathcal F$ is a $\lambda$-complete filter. Let $\mathcal G$ denote an extension of $\mathcal F$ to a $\nu$-complete ultrafilter.

For each $(x,z)\in X\times Z$, put $E_{x,z} = \{e\in E \mid e(x) = z\}$ and define $f\in Z^X$ by the assignment $f(x) = z \Leftrightarrow E_{x,z}\in\mathcal G$. We can do it like that since the ultrafilter $\mathcal G$ picks, for each fixed $x\in X$, exactly one element from the disjoint partition $E = \bigcup_{z\in Z} E_{x,z}$; recall that $|Z|<\nu$.

Now let $\varphi\in S$ be arbitrary and $x_1,\dots, x_n$ be variables freely occuring in $\varphi$. Then $\varnothing\neq E_{\{\varphi\}}\cap\bigcap_{i=1}^n E_{x_i,f(x_i)}\in\mathcal G$, and so $f\in E_{\{\varphi\}}$. We conclude that $S$ is realized in $\mathcal Z$ using the evaluation $f$.
\end{proof}

\begin{cor} \label{c:S_pos} Let $\kappa$ be a cardinal and $\lambda\leq \kappa$ a regular $\mathcal L_{\omega_1\omega}$-cardinal. Then every nonempty system $S$ of homogeneous $\mathbb Z$-linear equations in variables from a set $X$ is nontrivially solvable in $\mathbb Z$ whenever each its nonempty subsystem of cardinality $<\kappa$ is nontrivially solvable in $\mathbb Z$. In other words $\kappa\in\mathcal S$.
\end{cor}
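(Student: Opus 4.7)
The plan is to reduce the corollary to Proposition~\ref{p:S_pos} by encoding the nontriviality requirement as a collection of first-order inequations that we add to $S$. Take $L$ to be the language of abelian groups and let $\mathcal Z$ be $\mathbb Z$ regarded as an $L$-structure. Since $|Z| = \aleph_0 < \omega_1$ and $\lambda$ is a regular $\mathcal L_{\omega_1\omega}$-compact cardinal, the hypotheses of Proposition~\ref{p:S_pos} are available with $\nu = \omega_1$. Form the augmented system
\[
S' \;=\; S \;\cup\; \{\,\neg(x=0) \mid x \in X\,\},
\]
which is again a system of first-order $L$-formulas in the variables from $X$.

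Next I would verify the hypothesis of Proposition~\ref{p:S_pos} for $S'$: every $T' \in [S']^{<\lambda}$ is realized in $\mathcal Z$. Any such $T'$ splits as $T' = T \cup \{\,x \neq 0 \mid x \in Y\,\}$ with $T \subseteq S$ and $Y \subseteq X$. If $T = \varnothing$, then the constant map $f(x) = 1$ evidently realizes $T'$. Otherwise, $T$ is a \emph{nonempty} subsystem of $S$ of cardinality $|T| \leq |T'| < \lambda \leq \kappa$; by the assumption of the corollary, $T$ is nontrivially solvable in $\mathbb Z$, so there is $f : X \to \mathbb Z \setminus \{0\}$ satisfying every equation in $T$. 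This $f$ also satisfies every formula $x \neq 0$ with $x \in Y$, hence realizes $T'$.

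Having checked the hypothesis, Proposition~\ref{p:S_pos} yields an assignment $f : X \to \mathbb Z$ realizing all of $S'$ in $\mathcal Z$. By construction $f(x) \neq 0$ for every $x \in X$, and $f$ satisfies every equation of $S$, so $f$ witnesses nontrivial solvability of $S$ in $\mathbb Z$. This gives $\kappa \in \mathcal S$.

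I do not anticipate a genuine obstacle here; the only points that need a little care are (a) translating ``nontrivial'' into first-order inequations and (b) the cardinality bookkeeping, namely that for a subsystem $T' \subseteq S'$ of cardinality $< \lambda$ the equational part $T \subseteq S$ still has cardinality $< \lambda \leq \kappa$, so the corollary's hypothesis applies. The dichotomy on whether $T$ is empty is essential, because the hypothesis of the corollary is stated only for \emph{nonempty} subsystems.
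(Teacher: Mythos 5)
Your proof is correct and follows essentially the same route as the paper: encode the nontriviality requirement as first-order inequations and apply Proposition~\ref{p:S_pos} with $\nu=\omega_1$. The only (cosmetic) difference is that you add the formulas $x\neq 0$ as separate members of the system, whereas the paper conjoins $\bigwedge_i x_i\neq 0$ to each equation; your extra care with the empty equational part and with inequations for variables not occurring in $T$ is exactly the bookkeeping this variant requires.
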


\begin{proof} In the system $S$ replace each equation $\psi$ in variables $x_1,\dots,x_n\in X$ by the formula $\psi\,\&\bigwedge_{i=1}^n x_i \neq 0$ and use Proposition~\ref{p:S_pos}.
\end{proof}

\smallskip

Before we turn our attention to the negative part, we need one preparatory lemma which holds in the general context of $R$-modules over a (commutative) noetherian domain. Recall, that, for a module $M\in\ModR$ and an ordinal number $\sigma$, an increasing chain $\mathcal M = (M_\alpha\mid \alpha\leq\sigma)$ of submodules of $M$ is called a \emph{filtration of}~$M$ if $M_0 = 0, M_\beta = \bigcup_{\alpha<\beta} M_\alpha$ whenever $\beta\leq\sigma$ is a limit ordinal, and $M_\sigma = M$.

\begin{lem}\label{l:prep} Let $R$ be an infinite noetherian domain, $M$ a free $R$-module of rank $\mu\geq\aleph_0$, and $\mathcal M = (M_\alpha\mid \alpha\leq\sigma)$ be a filtration of $M$ where, for all $\alpha<\sigma$, $M_{\alpha+1} = M_\alpha + \langle a_\alpha \rangle$ with $a_\alpha\in M\setminus M_\alpha$. For each $\alpha<\sigma$, let $z_\alpha\in R$ be arbitrary.

Then there is a homomorphism $\psi:M \to R$ such that $\psi (a_\alpha)\neq z_\alpha$ for all $\alpha<\sigma$.
\end{lem}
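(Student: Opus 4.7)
The plan is to fix an $R$-basis $\{e_i\}_{i \in I}$ of $M$ (so $|I|=\mu$) and to build $\psi\colon M \to R$ by specifying the scalars $c_i := \psi(e_i)\in R$ via transfinite induction along a well-chosen well-ordering of~$I$. For each $\alpha<\sigma$, write $a_\alpha = \sum_{i\in F_\alpha} r_{\alpha,i} e_i$ with $F_\alpha\subseteq I$ finite and every $r_{\alpha,i}\ne 0$; the required inequality $\psi(a_\alpha)\ne z_\alpha$ then amounts to $\sum_{i\in F_\alpha} r_{\alpha,i} c_i \ne z_\alpha$.

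Well-order $I = \{i_\xi : \xi<\mu\}$ and put $\lambda(\alpha) := \max F_\alpha$ with respect to this well-ordering. I would pick $c_\xi := \psi(e_{i_\xi})$ by recursion on $\xi$: once $c_\eta$ for $\eta<\xi$ have been fixed, any constraint indexed by an $\alpha$ with $\lambda(\alpha)=\xi$ simplifies to $r_{\alpha,\xi}\, c_\xi \ne z_\alpha - \sum_{\eta<\xi} r_{\alpha,\eta}\, c_\eta$, and because $R$ is a domain with $r_{\alpha,\xi}\ne 0$ this forbids at most one value of~$c_\xi$. Setting $S_\xi := \{\alpha<\sigma : \lambda(\alpha)=\xi\}$, the inductive step succeeds as soon as $|S_\xi|<|R|$, since $R$ is infinite.

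The main obstacle is to secure the combinatorial bound $|S_\xi|<|R|$ for every $\xi<\mu$, and this is where the filtration $\mathcal M$ must really be used rather than ignored. Let $K$ be the field of fractions of $R$ and $J := \{\alpha<\sigma : R a_\alpha\cap M_\alpha=0\}$ the ``good'' indices, at which $M_{\alpha+1} = M_\alpha \oplus \langle a_\alpha\rangle$; since the $K$-dimension of $M_\alpha\otimes_R K$ strictly increases at precisely the steps in~$J$ and $M$ has rank~$\mu$, we have $|J|=\mu$, and $\{a_\beta : \beta\in J\}$ is $K$-linearly independent in $M\otimes_R K$. I would build the basis and its well-ordering in tandem with the filtration, introducing for each $\beta\in J$ a new basis element that matches $a_\beta$ modulo the elements already enumerated, and for each bad step $\alpha\notin J$ exploiting the fact that $a_\alpha$ is a $K$-combination of previously enumerated $a_\beta$'s together with the noetherianity of~$R$---which makes the ideal $\{r\in R : r a_\alpha\in M_\alpha\}$ finitely generated---to absorb the ensuing torsion contribution into a controlled block of the basis. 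Verifying that this bookkeeping really enforces $|S_\xi|<|R|$ everywhere, using $\sigma\le|M|=\max(|R|,\mu)$ together with the noetherian hypothesis, is the central technical content of the proof; once this is granted, the transfinite induction sketched above yields~$\psi$.
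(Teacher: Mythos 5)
Your reduction is fine as far as it goes: once a basis $\{e_{i_\xi}\mid\xi<\mu\}$ and a well-ordering are fixed, resolving each constraint $\alpha$ at the stage $\xi=\lambda(\alpha)=\max F_\alpha$ forbids at most one value of $c_\xi$ (since $R$ is a domain and $r_{\alpha,\lambda(\alpha)}\neq0$), so the recursion goes through whenever $|S_\xi|<|R|$ for every $\xi$. But the existence of a basis and well-ordering satisfying this bound is not a side condition --- it \emph{is} the lemma, and you have not proved it; you explicitly defer "verifying that this bookkeeping really enforces $|S_\xi|<|R|$" as "the central technical content." Moreover, the bookkeeping you sketch does not work as stated. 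You cannot in general "introduce for each $\beta\in J$ a new basis element that matches $a_\beta$ modulo the elements already enumerated": the submodules $M_\alpha$ need not be free, need not be direct summands, and the quotients $M_{\alpha+1}/M_\alpha$ can be torsion (already over $\Z$, take $a_\beta=2e$; over a general noetherian domain even the "good" steps with $Ra_\alpha\cap M_\alpha=0$ give no splitting). The finite generation of the ideal $\{r\in R\mid ra_\alpha\in M_\alpha\}$ says nothing about how many indices $\alpha$ accumulate on a given basis element, which is the quantity you must control; note also that $\sigma$ can have cardinality up to $\max(|R|,\mu)$, so there is genuinely something to prove. Without a precise construction here the proof has a hole exactly at its crux, and a careless choice of ordering really does fail (e.g.\ for $a_\alpha=e_0+e_{1+\alpha}$, $\alpha<\omega_1$, over $\Z$, any ordering placing $e_0$ last puts $\aleph_1$ constraints into one $S_\xi$).

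For comparison, the paper splits the work into two steps that together supply exactly what you are missing. For countable rank it proves, from noetherianity of finite-rank free modules, that $A_n=\{\alpha\mid I_\alpha\subseteq\{0,\dots,n\}\}$ is \emph{finite} for every $n$ (the chain $M_\alpha\cap\langle g_0,\dots,g_n\rangle$ strictly increases at each $\alpha\in A_n$), which is the bound your recursion needs in that case; you never make this argument either. For uncountable rank it invokes the Hill-lemma machinery of `closed' subsets from \cite[Section 7.1]{GT} to build a $\mu$-filtration $(M(S_\alpha))_{\alpha\le\mu}$ of $M$ whose members are simultaneously generated by subsets $B_\alpha$ of the basis and by subfamilies of the $a_\gamma$'s, with countable gaps $C_\alpha$, and then applies the countable case blockwise to $\pi_\alpha[{-}]\subseteq G_{C_\alpha}$ with the corrected targets $z_\gamma-\psi(a_\gamma-\pi_\alpha(a_\gamma))$. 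Some argument of this strength (or an equivalent substitute) is required to make your counting bound true; supplying it would essentially reproduce the paper's proof.
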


\begin{proof} First, assume that $\mu = \aleph_0$. Let $\{g_n\mid n<\omega\}$ be a set of free generators of~$M$. For each $\alpha<\sigma$, we express $a_\alpha$ as $\sum_{n\in I_\alpha} b_{n\alpha}g_n$, where $I_\alpha$ is a finite subset of~$\omega$ and $b_{n\alpha}\in R\setminus\{0\}$ for every $n\in I_\alpha$.

Using the fact that a free $R$-module of finite rank is noetherian, we infer that, for each $n<\omega$, the set $A_n = \{\alpha<\sigma\mid I_\alpha\subseteq \{0,1,\dots, n\}\}$ is finite. Note that $\sigma = \bigcup _{n<\omega}A_n$.
On the free generators of $M$, we recursively construct a~homomorphism $\psi:M \to R$ as follows:
\smallskip

Let $\psi (g_0)$ be arbitrary such that, for each $\alpha\in A_0$, $b_{0\alpha}\psi(g_0)\neq z_\alpha$. There is always an applicable choice by the hypothesis on $R$. Assume that $n>0$, $\psi(g_{n-1})$ is defined, and $\psi(a_\alpha)\neq z_\alpha$ for each $\alpha\in A_{n-1}$. 

We define $\psi(g_n)$ arbitrarily in such a way that, for each $\alpha\in A_n\setminus A_{n-1}$, we have $$b_{n\alpha}\psi(g_n) \neq z_\alpha - \sum_{k\in I_\alpha\setminus\{n\}} b_{k\alpha}\psi(g_k).$$
This is possible, since $A_n\setminus A_{n-1}$ is finite, $b_{n\alpha}\neq 0$ for each $\alpha$ from this set, and $R$ is an infinite domain. It immediately follows that $\psi(a_\alpha)\neq z_\alpha$ for each $\alpha\in A_n$.

\smallskip

Now, let $\mu$ be an uncountable cardinal. Again, let $\{g_\beta \mid \beta<\mu\}$ be a set of free generators of $M$, and put $G_B = \langle g_\beta \mid \beta \in B\rangle$ for all $B\subseteq\mu$.

We use ideas from \cite[Section 7.1]{GT}. First, we set $A_\alpha = \langle a_\alpha\rangle\leq M$. We say that a~subset $S$ of the ordinal $\sigma$ is \emph{`closed'} if every $\alpha\in S$ satisfies $$M_\alpha\cap A_\alpha \subseteq \sum _{\beta\in S,\beta<\alpha}A_\beta.$$

Notice that any ordinal $\alpha\leq\sigma$ is a `closed' subset of $\sigma$. For a `closed' subset $S$, we define $M(S) = \sum_{\alpha\in S}A_\alpha$. 
The results from \cite[Section 7.1]{GT} give us the following:

\begin{enumerate}
	\item For a system $(S_i\mid i\in I)$ of `closed' subsets, $\bigcap_{i\in I} S_i$ and $\bigcup_{i\in I} S_i$ is `closed' as well.
	\item For $S, S^\prime$ `closed' subsets of $\sigma$, we have $S\subseteq S^\prime \Longleftrightarrow M(S)\subseteq M(S^\prime)$.
	\item Let $S$ be a `closed' subset of $\sigma$ and $X$ be a countable subset of $M$. Then there is a `closed' subset $S^\prime$ such that $M(S)\cup X\subseteq M(S^\prime)$ and $|S^\prime\setminus S|<\aleph_1$.
\end{enumerate}

Using the properties listed above, we are going to construct a filtration $\mathcal N = (M(S_\alpha)\mid \alpha\leq\mu)$ of $M$ such that, for each $\alpha<\mu$, a) $S_\alpha$ is `closed', b) $S_{\alpha+1}\setminus S_\alpha$ is countable, and c) there exists $B_\alpha\subseteq\mu$ such that $G_{B_\alpha} = M(S_\alpha)$ and $\alpha\subseteq B_\alpha$.

We proceed by the transfinite recursion, starting with $S_0 = B_0 = \varnothing$. Let $S_\alpha$ and $B_\alpha$ be defined and $\alpha<\mu$. Then $|S_\alpha|+|B_\alpha|<\mu$ (using b) and c)). Let $B^0\supseteq B_\alpha\cup \{\alpha\}$ be any subset of $\mu$ with $|B^0\setminus B_\alpha| = \aleph_0$. By $(3)$, we find $S^0\supseteq S_\alpha$ such that $M(S^0)\supseteq G_{B^0}$ and $|S^0\setminus S_\alpha|<\aleph_1$. Assuming $B^n, S^n$ are defined for $n<\omega$, we can find $B^{n+1}\supseteq B^n$ with $|B^{n+1}\setminus B^n|<\aleph_1$ such that $G_{B^{n+1}}\supseteq M(S^n)$, and $S^{n+1}\supseteq S^n$ with $|S^{n+1}\setminus S^n|<\aleph_1$ such that $M(S^{n+1})\supseteq G_{B^{n+1}}$. Put $S_{\alpha+1} = \bigcup_{n<\omega} S^n$ and $B_{\alpha +1} = \bigcup_{n<\omega} B^n$. This completes the isolated step. In limit steps, we simply take unions. Since $M(S_\mu) = M$, we have $S_\mu = \sigma$ by $(2)$.

\smallskip

Now, for each $\alpha<\mu$, we have the countable sets $C_\alpha = B_{\alpha+1}\setminus B_\alpha$ and $T_\alpha = S_{\alpha+1}\setminus S_\alpha$, and the canonical projection $\pi_\alpha:M(S_{\alpha+1})\to G_{C_\alpha}$. Let $\tau$ be the ordinal type of $(T_\alpha,<)$, and fix an order-preserving bijection $i:\tau\to T_\alpha$.

Since $S_\alpha\cup(S_{\alpha+1}\cap \beta)$ is `closed' for any $\beta\leq\sigma$ by $(1)$, the part $(2)$ yields that the chain $(N_\beta \mid \beta\leq\tau)$ of modules defined as $N_\beta = M(S_\alpha\cup(S_{\alpha+1}\cap i(\beta)))$, for $\beta<\tau$, and $N_\tau = M(S_{\alpha+1})$ is strictly increasing. Notice that $N_0 = M(S_\alpha)$.

If we put $\bar{N}_\beta = \pi_\alpha[N_\beta]$ for all $\beta\leq\tau$, it follows that the strictly increasing chain $(\bar{N}_\beta \mid \beta\leq\tau)$ is a filtration of the free module $G_{C_\alpha}$ of countable rank. Moreover, for each $\beta<\tau$, we have $\bar{N}_{\beta+1} = \bar{N}_\beta + \langle\pi_\alpha(a_{i(\beta)})\rangle$.

Finally, we recursively define the homomorphism $\psi:M\to R$. Let $\alpha<\mu$ and assume that $\psi\restriction G_{B_\alpha}$ is constructed with the property $\psi(a_\gamma)\neq z_\gamma$ for all $\gamma\in S_\alpha$. By the already proven part for $\mu = \aleph_0$, we can define $\psi\restriction G_{C_\alpha}$ in such a way that $\psi(\pi_\alpha(a_\gamma))\neq z_\gamma-\psi(a_\gamma - \pi_\alpha(a_\gamma))$ for all $\gamma\in T_\alpha$; observe that the right-hand side of the inequality is already defined since $a_\gamma - \pi_\alpha(a_\gamma)\in G_{B_\alpha}$. We immediately get $\psi(a_\gamma)\neq z_\gamma$ for all $\gamma\in S_{\alpha+1}$.

\end{proof}

\medskip

For the negative part, we start with an uncountable cardinal $\kappa$ and a~$\kappa$-free group $G$ with the trivial dual, i.e. with the property $G^*:= \Hom(G,\mathbb Z) = 0$; here, $\kappa$-free means that any $<\!\kappa$-generated subgroup of $G$ is free. We will discuss the existence of such groups, as well as the question whether $G$ can be taken with $|G| = \kappa$, later on. Firstly, we show how the existence of such $G$ implies that $\kappa\not\in\mathcal S$.

\medskip

Let us denote by $\lambda$ the cardinality of $G$ and express $G$ as a quotient $F/K$ where $F$ is a free group of rank $\lambda$. Notice that $\lambda\geq \kappa$. Let $\pi:F\to F/K$ denote the canonical projection and let $\{e_\alpha\mid \alpha<\lambda\}$ be a set of free generators of the group~$F$. For each $A\subseteq\lambda$, let $F_A$ denote the subgroup of $F$ generated by $\{e_\alpha\mid \alpha\in A\}$. We can w.l.o.g. assume that $$\Img(\pi\restriction F_\beta) \subsetneq \Img(\pi\restriction F_{\beta +1})\hbox{ for each ordinal }\beta < \lambda.\eqno{(*)}$$

The group $K$ is also free of rank $\lambda$. If it had a smaller rank, $G$ would have possessed a free direct summand---a contradiction with $G^* = 0$. Let $\{k_\beta\mid \beta<\lambda\}$ denote a set of (free) generators of the group $K$. Consider the uncountable set $$S = \Biggl\{\sum_{\alpha\in J_\beta} a_{\alpha\beta}x_\alpha =~0 \mid \beta<\lambda, J_\beta\in [\lambda]^{<\omega}, (\forall \alpha\in J_\beta)(a_{\alpha\beta}\in\mathbb Z), \sum_{\alpha\in J_\beta} a_{\alpha\beta}e_\alpha = k_\beta\Biggr\}$$ of homogeneous $\mathbb Z$-linear equations with the set $\{x_\alpha\mid\alpha<\lambda\}$ of variables. We will show that this is the desired counterexample.

\medskip

First of all, $S$ does not have even a weakly nontrivial solution in $\mathbb Z$. Indeed, any such solution would define a nonzero homomorphism $\psi$ from $F$ to $\mathbb Z$ which is zero on $K$. Hence $\psi$ would provide for a nonzero homomorphism from $G$ to $\mathbb Z$, a contradiction.

On the other hand, we can show

\begin{prop} \label{p:S_neg} Any system $T\subseteq S$ with cardinality $<\kappa$ has a nontrivial solution in $\mathbb Z$.
\end{prop}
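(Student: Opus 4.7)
Let $T\subseteq S$ be a subsystem with $|T|<\kappa$, and let $B\subseteq\lambda$ index the equations occurring in $T$, so $|B|\leq|T|<\kappa$. Put $A=\bigcup_{\beta\in B}J_\beta$, the set of indices of variables occurring in $T$; since each $J_\beta$ is finite and $\kappa$ is uncountable, $|A|<\kappa$ as well. A nontrivial solution of $T$ in $\mathbb Z$ is precisely a homomorphism $\psi:F_A\to\mathbb Z$ satisfying $\psi(k_\beta)=0$ for every $\beta\in B$ and $\psi(e_\alpha)\neq 0$ for every $\alpha\in A$, where $F_A=\langle e_\alpha\mid\alpha\in A\rangle$ and $K_B=\langle k_\beta\mid\beta\in B\rangle\subseteq F_A\cap K$.

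My plan is to produce $\psi$ by factoring through $\pi(F_A)=F_A/(F_A\cap K)$. Since $\pi(F_A)$ is a subgroup of $G$ of cardinality less than $\kappa$, the $\kappa$-freeness of $G$ makes $\pi(F_A)$ free. Any homomorphism $\phi:\pi(F_A)\to\mathbb Z$ composed with the projection $F_A\twoheadrightarrow\pi(F_A)$ automatically kills $F_A\cap K$, and hence also $K_B$. So it suffices to construct $\phi$ with $\phi(\pi(e_\alpha))\neq 0$ for every $\alpha\in A$; then $f(x_\alpha):=\phi(\pi(e_\alpha))$ is the required solution.

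Such a $\phi$ will come from Lemma~\ref{l:prep} applied with $R=\mathbb Z$, $M=\pi(F_A)$, and $z_\gamma=0$ for all $\gamma$. I enumerate $A=\{\alpha_\gamma\mid\gamma<\xi\}$ in the ordering inherited from $\lambda$ and set $M_\gamma=\langle \pi(e_{\alpha_\rho})\mid\rho<\gamma\rangle$ for every $\gamma\leq\xi$, with $a_\gamma=\pi(e_{\alpha_\gamma})$. Then $M_0=0$, $M_\xi=\pi(F_A)$, and $M_{\gamma+1}=M_\gamma+\langle a_\gamma\rangle$; the decisive point is that $a_\gamma\notin M_\gamma$. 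This follows because $\alpha_\rho<\alpha_\gamma$ whenever $\rho<\gamma$, so $M_\gamma\subseteq\pi(F_{\alpha_\gamma})$, while property $(*)$ states exactly that $\pi(e_{\alpha_\gamma})\notin\pi(F_{\alpha_\gamma})$. Lemma~\ref{l:prep} then delivers $\phi$ with $\phi(\pi(e_\alpha))\neq 0$ for all $\alpha\in A$.

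The main obstacle, and the conceptual core of the argument, is arranging a filtration of $\pi(F_A)$ in which every member of the generating set $\{\pi(e_\alpha)\mid\alpha\in A\}$ appears as a genuinely new cyclic step: in general, a generating set of a free abelian group cannot be linearly ordered so that each element lies outside the subgroup spanned by its predecessors, yet the ordering inherited from $\lambda$ achieves this effortlessly thanks to $(*)$. (In the trivial case that $|A|$ is finite, $\pi(F_A)$ may have finite rank; the countable-case proof of Lemma~\ref{l:prep} still applies verbatim, or alternatively one avoids directly the finitely many hyperplanes $\{\phi(\pi(e_\alpha))=0\}$ in $\Hom(\pi(F_A),\mathbb Z)$.)
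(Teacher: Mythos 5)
Your argument is correct and is essentially the paper's own proof: factor through $M=\pi(F_A)$, use $\kappa$-freeness to see $M$ is free, filter $M$ by the subgroups generated by the $\pi(e_\alpha)$ in the order inherited from $\lambda$ so that $(*)$ makes each step proper, and apply Lemma~\ref{l:prep} with $z_\gamma=0$. The only cosmetic difference is that the paper sidesteps your parenthetical finite-rank case by simply choosing the index set $A$ infinite from the start, which is always possible since $\kappa$ is uncountable.
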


\begin{proof} Let $A\in [\lambda]^{<\kappa}$ be an infinite set such that whenever $x_\alpha$ appears in an equation from $T$ then $\alpha\in A$. Put $M = \Img(\pi\restriction F_A)$.

Since $G$ is $\kappa$-free, $M$ is a free group (of infinite rank). Let $\sigma$ denote the ordinal type of $A$ and fix an order-preserving bijection $i:\sigma\to A$. For each $\alpha\leq\sigma$, set $M_\alpha = \langle \pi(e_{i(\beta)}) \mid \beta<\alpha\rangle$. Then $(M_\alpha \mid \alpha\leq\sigma)$ is a filtration of $M$ such that $M_{\alpha+1} = M_\alpha + \langle\pi(e_{i(\alpha)})\rangle$ where $\pi(e_{i(\alpha)})\not\in M_\alpha$ for all $\alpha<\sigma$ (using $(*)$).

Applying Lemma~\ref{l:prep} with $R = \mathbb Z$ and $z_\gamma = 0$ for all $\gamma<\sigma$, we obtain a~homomorphism $\psi:M \to \mathbb Z$ such that $\psi(\pi(e_\alpha))\neq 0$ for all $\alpha\in A$. The assignment $x_\alpha\mapsto \psi(\pi(e_\alpha))$, $\alpha\in A$, is the desired nontrivial solution of the system $T$ in $\mathbb Z$.
\end{proof}

\begin{cor} \label{c:S_neg} Let $\kappa$ be an uncountable cardinal. If there exists a $\kappa$-free group $G$ with $G^* = 0$, then $\kappa\not\in\mathcal S\cup\mathcal {WS}$.
\end{cor}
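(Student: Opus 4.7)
The plan is to package together the construction carried out in the preceding paragraphs with Proposition~\ref{p:S_neg}, which already contains the essential content. Given a $\kappa$-free group $G$ with $G^*=0$, I would fix a presentation $G = F/K$ with $F$ free of rank $\lambda = |G|$, a basis $\{e_\alpha \mid \alpha < \lambda\}$ of $F$ arranged to satisfy condition $(*)$, and a basis $\{k_\beta \mid \beta < \lambda\}$ of the (necessarily rank-$\lambda$) subgroup $K$; from these data one forms the system $S$ exactly as displayed above.

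Next I would verify two complementary properties of $S$. First, $S$ has no weakly nontrivial solution in $\mathbb Z$: any such solution $f\colon\{x_\alpha\}\to\mathbb Z$, by definition not identically zero on the variables that actually occur, determines a nonzero homomorphism $F\to\mathbb Z$ sending $e_\alpha\mapsto f(x_\alpha)$, which by the defining equations of $S$ kills every $k_\beta$ and thus factors through a nonzero element of $G^* = \Hom(G,\mathbb Z)$, contradicting $G^*=0$. Since every nontrivial solution is \emph{a fortiori} weakly nontrivial, $S$ has no nontrivial solution either. Second, every subsystem $T\subseteq S$ of cardinality strictly less than $\kappa$ admits a nontrivial (and hence weakly nontrivial) solution: this is precisely Proposition~\ref{p:S_neg}, whose proof rests on the $\kappa$-freeness of $G$ combined with Lemma~\ref{l:prep}.

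Combining these two facts, the single system $S$ witnesses simultaneously the failure of both compactness properties at $\kappa$, namely that each of its ${<}\kappa$-sized subsystems is (weakly) nontrivially solvable in $\mathbb Z$ while $S$ itself is not. Consequently $\kappa\notin\mathcal S$ and $\kappa\notin\mathcal{WS}$, establishing $\kappa\notin\mathcal S\cup\mathcal{WS}$. The genuine work---construction of $S$ from the presentation of $G$, and the recursive application of Lemma~\ref{l:prep} to build homomorphisms avoiding prescribed values on a filtration---has already been carried out before the corollary is stated, so I anticipate no substantive obstacle: the corollary is a short synthesis and the only point deserving a sentence of care is the passage from a weakly nontrivial solution of $S$ to a nonzero element of $G^*$.
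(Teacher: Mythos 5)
Your proposal is correct and is essentially the paper's own argument: the corollary is just the synthesis of the construction of $S$ from a presentation $G=F/K$ (with $(*)$ and $\rk K=\lambda$), the observation that a weakly nontrivial solution of $S$ would yield a nonzero element of $G^{*}$, and Proposition~\ref{p:S_neg} giving nontrivial solutions of all subsystems of cardinality $<\kappa$. Nothing is missing; the paper leaves exactly this assembly to the reader.
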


\section{The case of $\mathcal{WS}$}

For the weaker notion of nontrivial solvability, we have the following general result.

\begin{prop} \label{p:WS_main} Let $\kappa$ be an uncountable cardinal. The following conditions are equivalent:
\begin{enumerate}
	\item There exists a regular cardinal $\lambda\leq\kappa$ which is $\mathcal L_{\omega_1\omega}$-compact.
  \item There is a regular cardinal $\lambda\leq\kappa$ such that each group $A\in\Ker\Hom(-,\Z)$ is the sum of its subgroups of cardinality $<\lambda$ which are contained in $\Ker\Hom(-,\Z)$.
	\item For any nonempty system $S$ of homogeneous $\Z$-linear equations \st $S$ has no weakly nontrivial solution in $\Z$, and any $C\in [S]^{<\kappa}$, there exists $T\in[S]^{<\kappa}$ such that $C\subseteq T$ and $T$ has no weakly nontrivial solution in $\Z$.
\end{enumerate}
\end{prop}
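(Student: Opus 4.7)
The plan is to prove the cycle $(1)\Rightarrow(3)\Rightarrow(2)\Rightarrow(1)$.

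For $(1)\Rightarrow(3)$, I would iterate Proposition~\ref{p:S_pos}. The key observation is that $S$ has no weakly nontrivial solution \iff for each variable $x\in X_S$ the formula set $S\cup\{x\neq 0\}$ is unrealized in $\Z$. The contrapositive of Proposition~\ref{p:S_pos} (applied with $\nu=\omega_1$ and $\mathcal Z=\Z$) then yields, for each such $x$, a subsystem $T_x\in[S]^{<\lambda}$ every $\Z$-solution of which forces $x=0$. Starting from $T^0=C$ (reducing to $|C|<\lambda$ first by covering $C$ with $<\lambda$-sized pieces and using that a union of systems without weakly nontrivial solutions has none), I would recursively set $T^{n+1}=T^n\cup\bigcup_{x\in X_{T^n}}T_x$. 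Regularity of $\lambda$ keeps each $|T^n|<\lambda$, so $T=\bigcup_{n<\omega}T^n$ has $|T|<\kappa$, and every variable of $T$ is forced to zero by some $T_x\subseteq T$, proving that $T$ has no weakly nontrivial solution.

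For $(3)\Rightarrow(2)$, I would convert failing small subsystems into small subgroups with trivial dual. Given $A\in\Ker\Hom(-,\Z)$, present $A=F/K$ with $F$ free on a set $X$ and let $S$ be the canonical system whose equations correspond to a chosen generating set of $K$. Then $S$ has no weakly nontrivial solution, and every $x\in X$ must appear in some equation of $S$ (else its coordinate projection would be a nonzero element of $A^*$). For each $a\in A$, pick a finite $C\subseteq S$ containing one equation involving each variable in a fixed finite expression of $a$; by (3) we obtain $T\in[S]^{<\kappa}$ with $C\subseteq T$ and $T$ without a weakly nontrivial solution. Then $B=\pi(F_{X_T})\leq A$ contains $a$, has $|B|<\kappa$, and satisfies $B^*=0$: any $\phi\colon B\to\Z$ would lift to a map $F_{X_T}\to\Z$ vanishing on $K_T$, which must be zero by the failure of $T$. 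If $\kappa$ is regular this yields (2) with $\lambda=\kappa$; for singular $\kappa$ I would refine the argument by observing that the iteration in the proof of $(1)\Rightarrow(3)$ actually produces $T$ of size $<\lambda$ for some regular $\lambda\leq\kappa$ supplied by a separate application of the cycle, hence (2) with a regular witness.

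The hard part will be $(2)\Rightarrow(1)$. I plan to prove the contrapositive by a set-theoretic construction: assuming $\lambda$ is \emph{not} $\mathcal L_{\omega_1\omega}$-compact, pick a $\lambda$-complete filter $\mathcal F$ on some set $I$ that does not extend to any $\aleph_1$-complete ultrafilter and use it, via a filtered ``closed-sets'' construction in the spirit of Lemma~\ref{l:prep} (or via the known Eklof--Shelah-type almost-free technology), to build an abelian group $A$ with $A^*=0$ all of whose subgroups of cardinality $<\lambda$ have nonzero dual. This directly contradicts (2). This group-theoretic characterization of $\mathcal L_{\omega_1\omega}$-compactness is the technical crux of the proposition; if carrying it out inline proves too intricate, I would invoke the corresponding classical Eklof/Shelah theorem as a black box.
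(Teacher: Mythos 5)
Your steps $(1)\Rightarrow(3)$ and the regular-$\kappa$ half of $(3)\Rightarrow(2)$ are sound, and the former is a genuinely different (and arguably cleaner) route than the paper's: the paper never uses Proposition~\ref{p:S_pos} here, but instead gets $(1)\Leftrightarrow(2)$ by citing \cite[Corollary 5.4]{BM} and proves $(2)\Rightarrow(3)$ by a group-theoretic closure argument; your ultrafilter argument with the sets $T_x$ forcing each variable to zero is a correct replacement. The serious problem is the singular case of $(3)\Rightarrow(2)$. There your fix --- ``a separate application of the cycle'' supplying a regular $\lambda\leq\kappa$ --- is circular: inside the implication $(3)\Rightarrow(2)$ you may not assume $(1)$, and $(1)$ is exactly what the cycle is supposed to deliver at the end. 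Moreover, this is not a removable technicality but the actual crux of the proposition: it is consistent (modulo a supercompact, by \cite{BM2}) that the least $\mathcal L_{\omega_1\omega}$-compact cardinal $\kappa$ is singular; for such $\kappa$ your argument only yields the sum decomposition with the singular bound $\kappa$, which does not give $(2)$, and what must actually be shown is that $(3)$ \emph{fails} for such $\kappa$. The paper handles precisely this case by quoting from \cite{BM2} that $\cf(\kappa)$ is at least the first measurable cardinal and then refuting $(3)$ via the system associated to $A=\bigoplus_{\alpha<\cf(\kappa)}A_\alpha$, where each $A_\alpha\in\Ker\Hom(-,\Z)$ is not a sum of its trivial-dual subgroups of size $<\kappa_\alpha$, running an $\omega$-step closure to reach a contradiction. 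Your proposal contains no substitute for this ingredient, so the equivalence is only established for $\kappa$ not of this form.

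A secondary point concerns $(2)\Rightarrow(1)$. The group you need for a non-$\mathcal L_{\omega_1\omega}$-compact regular $\lambda$ is not produced by ``almost-free technology'' in the spirit of Lemma~\ref{l:prep}: $\lambda$-free groups with trivial dual provably cannot be constructed in ZFC for arbitrary $\lambda$ (this is exactly the obstruction discussed in Section~4 of the paper). The correct object is the reduced power $\Z^I/\mathcal F$ for a $\lambda$-complete filter $\mathcal F$ with no $\omega_1$-complete ultrafilter extension, as in \cite[Theorem 5.3]{BM}: triviality of its dual is the classical slenderness/\L o\'s--Eda fact, while $\lambda$-completeness embeds every subgroup of size $<\lambda$ into $\Z^I$, so every nonzero such subgroup has nonzero dual, contradicting $(2)$ for that $\lambda$. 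If you state the black box in this form, that direction is fine (and matches what the paper cites); as written, the intended construction is misidentified.
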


\begin{proof} The equivalence of $(1)$ and $(2)$ follows directly from \cite[Corollary 5.4]{BM}. Let us show that $(2)$ is equivalent to $(3)$. To this end, we are going to use the following two-way translation.

Given any system $S=\{k_j = 0 \mid j\in J\}$ of homogeneous $\Z$-linear equations with the set $X$ of variables, we can build a group $A = F/K$ where $F$ is freely generated by the elements of the set $X$ and $K$ is generated by the set $\{k_j\mid j\in J\}$. Then $\Hom(A,\Z) = 0$ \iff $S$ has no weakly nontrivial solution in $\Z$. On the other hand, for a given group $A$ and its presentation $F/K$ where $F$ is freely generated by a set $X$, the same equivalence holds for the system $S = \{k_j = 0 \mid j\in J\}$ of homogeneous $\Z$-linear equations where $\{k_j \mid j\in J\}$ is a fixed set of generators of $K$ expressed as $\Z$-linear combinations of elements from the set $X$.

Proving $(2) \Longrightarrow (3)$, we start with a system $S$ and a set $C\in [J]^{<\kappa}$.
Consider the group $A$ constructed for $S$ as in the previous paragraph, and let $Y_0$ denote the set of all the elements from $X$ appearing in equations $k_j = 0, j\in C$.

Let $\mu \geq \lambda$ be a regular uncountable cardinal such that $|C| < \mu \leq \kappa$. Since $\Ker\Hom(-,\Z)$ is closed under direct sums and quotients, and $\mu$ is regular, there exists, by $(2)$, $G_0\in\Ker\Hom(-,\Z)$ such that $G_0$ is a subgroup of $A$, $|G_0|<\mu$ and $Y_0+K := \{y+K\mid y\in Y_0\}\subseteq G_0$. Now, take any $Y_1\in [X]^{<\mu}, Y_0\subseteq Y_1$ such that:
\begin{enumerate}
	\item[$(a)$] $G_0$ is contained in the subgroup of $A$ generated by $Y_1 + K$.

	\item[$(b)$] There exists $C_0\in [J]^{<\mu}$ such that $\langle Y_0 \rangle \cap K$ is contained in the subgroup of $K$ generated by $\{k_j\mid j\in C_0\}$, and $Y_1$ contains all the elements from $X$ appearing in equations $k_j = 0, j\in C_0$.
\end{enumerate}
For this $Y_1$, we obtain, using $(2)$, a subgroup $G_1$ of $A$ with $|G_1|<\mu$, and so on.

After $\omega$ steps, we have the group $G = \sum_{n<\omega} G_n \in\Ker\Hom(-,\Z)$ generated by $Y+ K$ where $Y = \bigcup_{n<\omega} Y_n\in [X]^{<\mu}$. By the construction, we have also $G = \langle y+K\mid y\in Y\rangle \cong \langle Y\rangle/\langle k_j \mid j\in \bigcup_{n< \omega} C_n\rangle$. Finally, we put $T = \{k_j = 0 \mid j\in \bigcup_{n< \omega} C_n\}$.

\smallskip

Now, let us prove the implication $\neg(1) \Longrightarrow \neg(3)$. First, assume that $\kappa$ is not $\mathcal L_{\omega_1\omega}$-compact. Following \cite[Theorem 5.3]{BM} and its proof, we start with $A = \Z^I/\mathcal F$ where $\mathcal F$ is a $\kappa$-complete filter on $I$ which cannot be extended to an $\omega_1$-complete ultrafilter. From the latter part, it follows that $\Hom(A,\Z) = 0$. The $\kappa$-completeness of $\mathcal F$, on the other hand, assures that any subgroup of $A$ of cardinality $<\kappa$ can be embedded into $\Z^I$.

Consider a system $S$ of homogeneous $\Z$-linear equations associated to the group $A$ presented as $F/K$ where $F$ is freely generated by a set $X$. We can w.\,l.\,o.\,g. assume that no $x\in X$ is contained in $K$. Let $C\in [J]^{<\kappa}$ be non-empty. We shall show that the system $\{k_j = 0\mid j\in C\}$ has weakly nontrivial solution in $\Z$.

As in the proof of the other implication, we can possibly enlarge $C$ to some $D\subseteq J$ such that $|D|\leq |C| + \aleph_0$ and---denoting by $Y$ the set of all the elements from $X$ appearing in equations $k_j = 0, j\in D$---$\,\langle y+K\mid y\in Y\rangle \cong \langle Y\rangle/\langle k_j\mid j\in D\rangle$. Let us denote the latter group by $H$ and fix an embedding $i:H\to \Z^I$ (which exists since $|H|<\kappa$).

Let $y\in Y$ be any element appearing in (one of the) equations $k_j = 0, j\in C$. Since $i(y+K)\neq 0$ there is a projection $\pi:\Z^I \to \Z$ such that $\pi i(y+K)\neq 0$. The assignment $x\mapsto \pi i(x+K)$ defines the desired weakly nontrivial solution of the system $\{k_j = 0\mid j\in C\}$ in $\Z$.

\smallskip

It remains to tackle the possibility that $\kappa$ is the least $\mathcal L_{\omega_1\omega}$-compact cardinal and $\kappa$ is singular. We know by \cite{BM2} that $\gamma = cf(\kappa)$ is greater than or equal to the first measurable cardinal in this case. Let $(\kappa_\alpha\mid \alpha<\gamma)$ be an increasing sequence of cardinals $<\kappa$ converging to $\kappa$.

Consider the group $A = \bigoplus _{\alpha<\gamma} A_\alpha$ where, for each $\alpha<\gamma$, $A_\alpha\in\Ker\Hom(-,\Z)$ is not a sum of its subgroups of cardinality $<\kappa_\alpha$ which belong to $\Ker\Hom(-,\Z)$. Assume, for the sake of contradiction, that $(3)$ holds for the system $S$ of homogeneous $\Z$-linear equations associated to the group $A$ (more precisely, to its presentation $F/K$).

By the definition of $A$, there exists, for each $\alpha < \gamma$, an element $a_\alpha\in A$ such that $a_\alpha$ is not contained in any subgroup $H$ of $A$ of cardinality $<\kappa _\alpha$ with the property $\Hom (H,\Z) = 0$.

We know that there is $C_0\in [J]^{<\kappa}$ and $Y_0\subseteq X$ consisting of the elements from $X$ appearing in the equations $k_j = 0, j\in C_0$ such that $\{a_\alpha \mid \alpha<\gamma\}\subseteq \langle y+K \mid y\in Y_0\rangle \cong \langle Y_0 \rangle /\langle k_j \mid j\in C_0\rangle$.

For this $C_0$, we obtain a corresponding $T_0\in [J]^{<\kappa}$ using $(3)$. We continue by finding $C_1\in [J]^{<\kappa}$ and $Y_1\in [X]^{<\kappa}$ such that $T_0\subseteq C_1, Y_0\subseteq Y_1$ and $\langle y+K \mid y\in Y_1\rangle \cong \langle Y_1 \rangle /\langle k_j \mid j\in C_1\rangle$, and so forth.

Put $T = \bigcup _{n<\omega} T_n = \bigcup _{n<\omega} C_n$ and $Y = \bigcup _{n<\omega} Y_n$. The system $\{k_j = 0 \mid j\in T\}$ has cardinality $<\kappa$ (since $\gamma$ is uncountable) and it has no weakly nontrivial solution in $\Z$. Whence the subgroup $H = \langle y+K \mid y\in Y\rangle \cong \langle Y\rangle/\langle k_j \mid j\in T\rangle$ of $A$ belongs to $\Ker\Hom(-,\Z)$. However, this is impossible since $a_\alpha\in H$ for $\alpha<\gamma$ satisfying $|H|<\kappa_\alpha$.
\end{proof}

In the proof above, we have actually showed a little bit more. In fact, we have the following

\begin{thm} \label{t:WS_main} Let $\kappa$ be a cardinal, and assume that $\kappa$ is not at the same time singular and the least $\mathcal L_{\omega_1\omega}$-compact cardinal. The following conditions are equivalent:
\begin{enumerate}
	\item $\kappa$ is $\mathcal L_{\omega_1\omega}$-compact.
	\item Any nonempty system $S$ of homogeneous $\Z$-linear equations has a weakly nontrivial solution in $\Z$ provided that each its nonempty subsystem of cardinality $<\kappa$ has one. In other words, $\kappa\in\mathcal{WS}$.
\end{enumerate}
\end{thm}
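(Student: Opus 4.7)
The plan is to deduce Theorem~\ref{t:WS_main} from Proposition~\ref{p:WS_main}: one direction from the equivalence $(1)\Leftrightarrow(3)$ proved there, and the other by re-reading the proof of the Proposition for the extra information it contains.

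For $(1)\Rightarrow(2)$, assume $\kappa$ is $\mathcal L_{\omega_1\omega}$-compact. If $\kappa$ is regular, take $\lambda=\kappa$. Otherwise $\kappa$ is singular, so the proviso of the theorem forces $\kappa$ not to be the least $\mathcal L_{\omega_1\omega}$-compact cardinal; pick any $\mathcal L_{\omega_1\omega}$-compact $\mu<\kappa$. By the upward-closure of $\mathcal L_{\omega_1\omega}$-compactness recalled at the start of Section~2, the regular successor $\lambda:=\mu^+\leq\kappa$ is again $\mathcal L_{\omega_1\omega}$-compact. Condition~(1) of Proposition~\ref{p:WS_main} therefore holds, and by the implication $(1)\Rightarrow(3)$ proved there, for every nonempty system $S$ without weakly nontrivial solution one can (taking $C=\emptyset$) find some nonempty $T\in[S]^{<\kappa}$ without weakly nontrivial solution. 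Contraposition yields $\kappa\in\mathcal{WS}$.

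For $(2)\Rightarrow(1)$, I argue by contraposition: suppose $\kappa$ is not $\mathcal L_{\omega_1\omega}$-compact. I will not reuse Proposition~\ref{p:WS_main} as a black box; instead I open its proof. In the case ``$\kappa$ is not $\mathcal L_{\omega_1\omega}$-compact'' of the implication $\neg(1)\Rightarrow\neg(3)$, a concrete system $S$ is constructed from $A=\Z^I/\mathcal F$, with $\mathcal F$ a $\kappa$-complete filter on $I$ not extendable to an $\omega_1$-complete ultrafilter. The proof there shows both that $S$ has no weakly nontrivial solution (since $\Hom(A,\Z)=0$) and that, for \emph{every} nonempty $C\in[J]^{<\kappa}$, the subsystem $\{k_j=0\mid j\in C\}$ admits a weakly nontrivial solution. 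This is exactly a witness to $\kappa\notin\mathcal{WS}$.

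The only delicate point is the step producing a regular witness $\lambda$ in the forward direction: this is the unique place where the proviso of the theorem is used, and it corresponds precisely to the exceptional case of a singular least $\mathcal L_{\omega_1\omega}$-compact cardinal that Proposition~\ref{p:WS_main} already had to treat separately in its proof.
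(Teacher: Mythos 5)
Your proposal is correct and follows essentially the same route as the paper: the forward direction is obtained from the implication $(1)\Rightarrow(3)$ of Proposition~\ref{p:WS_main} (with the empty $C$ and the convention that the empty system is weakly nontrivially solvable), and the converse is read off from the first part of the proof of $\neg(1)\Rightarrow\neg(3)$ there, namely the system built from $A=\Z^I/\mathcal F$. Your explicit production of a regular $\mathcal L_{\omega_1\omega}$-compact $\lambda\leq\kappa$ via $\mu^+$ in the singular case is exactly the point the paper leaves implicit in its ``follows immediately,'' and it is where the proviso on $\kappa$ is used, so the argument is complete.
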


\begin{proof} The implication `$(1)\Longrightarrow (2)$' follows immediately from `$(1)\Longrightarrow (3)$' in Proposition~\ref{p:WS_main}. The other implication then from the first part of the proof of $`\neg (1)\Longrightarrow \neg (3)$' in Proposition~\ref{p:WS_main}.

\end{proof}

\section{Concluding remarks} \label{s:conrem}

The problem of existence of $\kappa$-free groups with trivial dual turns out to be rather delicate. Under the assumption V = L (even a much weaker one), there are $\kappa$-free groups with trivial dual for any uncountable cardinal $\kappa$. Moreover, if $\kappa$ is regular and not weakly compact, then the groups can be constructed of cardinality $\kappa$, see~\cite{DG}. If $\kappa$ is singular or weakly compact, then $\kappa$-free implies $\kappa^+$-free. For more information on the topic, we refer to \cite[Chapter VII]{EM}. Anyway, we have $\mathcal{S} = \mathcal{WS} = \varnothing$ under V~=~L by Corollary~\ref{c:S_neg}.

In \cite{GS}, G\"obel and Shelah show in ZFC that $\aleph_n$-free groups with cardinality $\beth_n$ and trivial dual exist for all $0<n<\omega$. This is further generalized in \cite{Sh}\footnote{Very heavy in content. Unpublished outside arXiv.org so far.}, where Shelah proves in ZFC the existence of $\kappa$-free groups with trivial dual for any uncountable $\kappa<\aleph_{\omega_1\cdot\omega}$. On the other hand, he also shows (modulo the existence of a~supercompact cardinal) that it is relatively consistent with ZFC that there is no $\aleph_{\omega_1\cdot\omega}$-free group with trivial dual.

By Corollary~\ref{c:S_neg}, we thus know in ZFC that $\kappa\not\in\mathcal S$ for $\kappa<\aleph_{\omega_1\cdot\omega}$. However, we do not know what happens for larger cardinals $\kappa$ since the existence of a $\kappa$-free group with trivial dual is just a sufficient condition for $\kappa\not\in\mathcal S$. We have only the upper bound given by Corollary~\ref{c:S_pos}. It might still be possible that $\mathcal S = \mathcal {WS}$ where for the latter class, we have a decent description in Theorem~\ref{t:WS_main}. As shown in \cite{BM}, relative to the existence of a supercompact cardinal, there are models of ZFC where the smallest $\mathcal L_{\omega_1\omega}$-compact cardinal $\kappa$ is singular. In this only case, we cannot resolve the question whether $\kappa\in\mathcal{WS}$ although we conjecture that this is not the case, which would readily imply that at least $\mathcal{WS}\subseteq \mathcal{S}$ always holds.

A possible direction for further research is to investigate further what more can be proved in ZFC about the class $\mathcal S$.

\bigskip

\emph{Acknowledgement}: I would like to thank Petr Glivick\'y for a fruitful discussion about compactness problems in set theory.

%
%
%


\bigskip


\begin{thebibliography}{AHT}
%
%
%
\bibitem{BM} J.\ Bagaria, M.\ Magidor,
  \emph{Group radicals and strongly compact cardinals},
  Trans.\ Amer.\ Math.\ Soc.\ \textbf{366} (2014), no.\ 1, 1857--1877.
%
\bibitem{BM2} J.\ Bagaria, M.\ Magidor,
  \emph{On $\omega_1$-strongly compact cardinals},
   J.\ Symb.\ Log.\ \textbf{79} (2014), no.~\ 1, 266--278.
%
%
%
\bibitem{DG} M.\ Dugas, R.\ G\"obel,
  \emph{Every cotorsion-free ring is an endomorphism ring},
  Proc.\ London Math.\ Soc.\ \textbf{45} (1982), 319--336.

%
\bibitem{EM} P.\ C.\ Eklof, A.\ H.\ Mekler, 
  \emph{Almost Free Modules},
  Revised ed., North--Holland, New York 2002.
%
%
%
%
\bibitem{GS} R.\ G\"obel, S.\ Shelah,
  \emph{$\aleph_n$-free modules with trivial dual},
  Res.\ Math.\ \textbf{54} (2009), 53--64.

\bibitem{GT} R.\ G\"obel, J.\ Trlifaj,
	\emph{Approximations and Endomorphism Algebras of Modules},
	de Gruyter Expositions in Mathematics \textbf{41},
	2nd revised and extended edition, Berlin-Boston 2012.
%
%
\bibitem{HeTa} H.\ Herrlich, E.\ Tachtsis,
  \emph{On the Solvability of Systems of Linear Equations over the Ring $\mathbb Z$ of Integers},
   Comment.\ Math.\ Univ.\ Carolin.\ \textbf{58}\ (2017), no. 2, 241--260. 
%
%
%
%
%
%
%
\bibitem{Sh} S.\ Shelah,
  \emph{Quite free complicated abelian group, pcf and black boxes},
Sh:1028, arxiv.org/abs/1404.2775.

%
\end{thebibliography}
\end{document}